\theoremstyle{plain} 
\newtheorem{theorem}{Theorem}[section] 
\newtheorem{lemma}[theorem]{Lemma}
\newtheorem{corollary}[theorem]{Corollary}
\newtheorem{proposition}[theorem]{Proposition}
\theoremstyle{definition} 
\newtheorem{remark}[theorem]{Remark}
\newcommand{\norm}[2]{\left|\!\left|#1\right|\!\right|_{#2}}
\newcommand{\Widim}{\mathrm{Widim}}
\newcommand{\Diam}{\mathrm{Diam}}
\begin{document}

\title[Macroscopic dimension of the $\ell^p$-ball]
{Macroscopic dimension of the $\ell^p$-ball with respect to the $\ell^q$-norm} 

\author[Masaki Tsukamoto]{Masaki Tsukamoto$^*$} 

\subjclass[2000]{46B20}

\keywords{$\ell^p$-space, Widim, mean dimension}

\thanks{$^*$Supported by Grant-in-Aid for JSPS Fellows (19$\cdot$1530)
from Japan Society for the Promotion of Science.}

\date{\today}



\maketitle

\begin{abstract}
We show estimates of the ``macroscopic dimension" of the 
$\ell^p$-ball with respect to the $\ell^q$-norm.
\end{abstract}

\section{Introduction}
\subsection{Macroscopic dimension}
Let $(X,d)$ be a compact metric space, $Y$ a topological space.
For $\varepsilon >0$, a continuous map $f:X\to Y$ is called an $\varepsilon$-embedding if 
$\Diam f^{-1}(y) \leq \varepsilon$ for all $y\in Y$.
Following Gromov \cite[p. 321]{Gromov}, we define the ``width dimension" $\Widim_\varepsilon X$ as the minimum integer 
$n$ such that there exist an $n$-dimensional polyhedron $P$ and an $\varepsilon$-embedding 
$f:X\to P$.
When we need to make the used distance $d$ explicit, we use the notation $\Widim_\varepsilon(X,d)$.
If we let $\varepsilon \to 0$, then $\Widim_\varepsilon$ gives the usual covering dimension:
\[ \lim_{\varepsilon\to 0} \Widim_\varepsilon X = \dim X .\]
$\Widim_\varepsilon X$ is a ``macroscopic" dimension of $X$ at the scale $\geq \varepsilon$
(cf. Gromov \cite[p. 341]{Gromov}).
It discards the information of $X$ ``smaller than $\varepsilon$".
For example, $[0,1]\times [0,\varepsilon]$ (with the Euclidean distance)
 macroscopically looks one-dimensional ($\varepsilon <1$):
\[ \Widim_\varepsilon [0,1]\times [0,\varepsilon] = 1 .\]

Using this notion, Gromov \cite{Gromov} defines ``mean dimension".
And he proposed open problems about this $\Widim_\varepsilon$ (see \cite[pp. 333-334]{Gromov}).
In this paper we give (partial) answers to some of them.

In \cite[p. 333]{Gromov}, he asks whether the simplex 
$\Delta^{n-1} :=\{x\in \mathbb{R}^n|\, x_k \geq 0 \,(1\leq k\leq n),\, \sum_{k=1}^{n} x_k = 1\}$ satisfies
\begin{equation}\label{eq: widim for simplex}
 \Widim_\varepsilon \Delta^{n-1} \sim \mathrm{const}_\varepsilon\, n .
\end{equation}
Our main result below gives the answer: 
If we consider the standard Euclidean distance on $\Delta^{n-1}$, 
then (\ref{eq: widim for simplex}) does \textit{not} hold.

In \cite[p. 333]{Gromov}, he also asks what is $\Widim_\varepsilon B_{\ell^p} (\mathbb{R}^n)$ 
with respect to the $\ell^q$-norm, where (for $1\leq p$)
\[ B_{\ell^p} (\mathbb{R}^n) := \left\{ x\in \mathbb{R}^n|\, \sum_{k=1}^n |x_k|^p \leq 1 \right\} .\]
Our main result concerns this problem.
For $1\leq q\leq \infty$, let $d_{\ell^q}$ be the $\ell^q$-distance on $\mathbb{R}^n$ given by 
\[ d_{\ell^q}(x, y) := \left( \sum_{k=1}^n |x_k - y_k|^q \right)^{1/q} .\]
We want to know the value of $\Widim_\varepsilon (B_{\ell^p}(\mathbb{R}^n), d_{\ell^q})$.
Especially we are interested in the behavior of $\Widim_\varepsilon (B_{\ell^p}(\mathbb{R}^n), d_{\ell^q})$
as $n \to \infty$ for small (but fixed) $\varepsilon$.
When $q=p$, we have (from ``Widim inequality" in \cite[p. 333]{Gromov}) 
\begin{equation}\label{eq: widim inequality}
 \Widim_\varepsilon (B_{\ell^p}(\mathbb{R}^n), d_{\ell^p}) = n \quad \text{for all $\varepsilon < 1$} .
\end{equation} 
(For its proof, see Gromov \cite[p. 333]{Gromov}, Gournay \cite[Lemma 2.5]{Gournay} or Tsukamoto \cite[Appendix A]{T3}.)
More generally, if $1\leq q \leq p \leq \infty$, then $d_{\ell^p} \leq d_{\ell^q}$ and hence
\begin{equation}\label{widim estimate for 1 leq q leq p}
 \Widim_\varepsilon (B_{\ell^p}(\mathbb{R}^n), d_{\ell^q}) = n \quad \text{for all $\varepsilon < 1$} .
\end{equation}
I think this is a satisfactory answer.
(For the case of $\varepsilon \geq 1$, there are still problems; see Gournay \cite{Gournay}.)
So the problem is the case of $1 \leq p < q \leq \infty$.
Our main result is the following:
\begin{theorem}\label{main theorem}
Let $1\leq p < q \leq \infty$ ($q$ may be $\infty$). 
We define $r\, (\geq p)$ by $1/p - 1/q = 1/r$.
For any $\varepsilon >0$ and $n \geq 1$, we have 
\begin{equation}\label{eq: main inequality}
 \Widim_\varepsilon (B_{\ell^p}(\mathbb{R}^n), d_{\ell^q}) 
\leq \min (n, \lceil (2/\varepsilon)^r \rceil -1 ) ,
\end{equation}
where $\lceil (2/\varepsilon)^r \rceil$ denotes the smallest integer $\geq (2/\varepsilon)^r$.
Note that the right-hand-side of (\ref{eq: main inequality}) 
becomes constant for large $n$ (and fixed $\varepsilon$).
Therefore $\Widim_\varepsilon (B_{\ell^p}(\mathbb{R}^n), d_{\ell^q})$ becomes stable as $n\to \infty$.
\end{theorem}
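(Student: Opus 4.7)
My plan is to construct an explicit $\varepsilon$-embedding of $B_{\ell^p}(\mathbb{R}^n)$ into a polyhedron of dimension at most $N := \lceil (2/\varepsilon)^r \rceil - 1$. The bound $\Widim_\varepsilon \leq n$ is immediate from the inclusion $B_{\ell^p}(\mathbb{R}^n) \hookrightarrow \mathbb{R}^n$, so the content is the second bound in the minimum. The map I propose is coordinatewise \emph{soft thresholding} at level $\delta := (N+1)^{-1/p}$:
\[
  f(x)_k := \mathrm{sign}(x_k) \cdot \max(|x_k| - \delta,\, 0), \qquad k = 1, \ldots, n.
\]
This is a continuous map $\mathbb{R}^n \to \mathbb{R}^n$, and I will show that its image on $B_{\ell^p}(\mathbb{R}^n)$ lies in a polyhedron of dimension at most $N$, while all of its fibres have $\ell^q$-diameter at most $\varepsilon$.

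The first ingredient is a support-size bound. Since $f(x)_k \neq 0$ forces $|x_k| > \delta$, the support $S$ of $f(x)$ satisfies $|S|\,\delta^p < \sum_{k \in S} |x_k|^p \leq 1$, so $|S| < \delta^{-p} = N+1$, i.e.\ $|S| \leq N$. Hence $f$ sends $B_{\ell^p}(\mathbb{R}^n)$ into the finite union $P := \bigcup_{|T| \leq N} \mathbb{R}^T \subseteq \mathbb{R}^n$ of coordinate subspaces, a polyhedron of dimension $\min(n, N)$. The second ingredient is an approximation estimate based on the elementary pointwise inequality $\min(t, \delta)^q \leq t^p \delta^{q-p}$ for $t \geq 0$ and $q > p$ (verified case-by-case on $t \leq \delta$ and $t > \delta$). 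Since $|x_k - f(x)_k| = \min(|x_k|, \delta)$, summing gives
\[
  d_{\ell^q}(x, f(x))^q \leq \delta^{q-p} \sum_k |x_k|^p \leq \delta^{q-p},
\]
so $d_{\ell^q}(x, f(x)) \leq \delta^{1 - p/q} = (N+1)^{-1/r} \leq \varepsilon/2$, using the defining relation $1/p - 1/q = 1/r$ and $N+1 \geq (2/\varepsilon)^r$. The case $q = \infty$ (where $r = p$) reduces to the direct bound $d_{\ell^\infty}(x, f(x)) \leq \delta \leq \varepsilon/2$. The triangle inequality then gives $d_{\ell^q}(x, y) \leq \varepsilon$ whenever $f(x) = f(y)$, so $f$ is the required $\varepsilon$-embedding.

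The conceptual heart of the proof, rather than a serious obstacle, is the balance between the two competing estimates: enlarging $\delta$ shrinks supports but inflates the approximation error, and the exponent $r$ defined by $1/p - 1/q = 1/r$ is precisely the one for which the two bounds meet. The only spot calling for arithmetic care is sharpening the support bound to the ceiling $\lceil (2/\varepsilon)^r \rceil - 1$ rather than $\lfloor (2/\varepsilon)^r \rfloor$; this works because $f(x)_k$ vanishes already when $|x_k| \leq \delta$, making the inequality $|S|\,\delta^p < 1$ strict and saving the unit.
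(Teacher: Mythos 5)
Your proof is correct and takes a genuinely different, arguably cleaner, route than the paper's. Both rest on soft thresholding, but the threshold differs. The paper works on the cone $\varLambda_n = \{x_1 \geq \cdots \geq x_n \geq 0\}$, shrinks the top $m$ coordinates by the \emph{data-dependent} amount $x_{m+1}$ (the $(m{+}1)$-st order statistic of the absolute values), zeroes the rest, and extends to $\mathbb{R}^n$ by equivariance under the hyperoctahedral group $\{\pm1\}^n\rtimes S_n$; this builds the support bound in automatically, but costs an equivariance lemma just to see that the extension is well defined and continuous. You threshold at the \emph{fixed} level $\delta=(N+1)^{-1/p}$, so $f$ is manifestly continuous on all of $\mathbb{R}^n$ with no group machinery, at the price of a one-line pigeonhole argument for the support size. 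The approximation estimates also differ in flavor: the paper invokes a convexity-type inequality (its Lemma 2.2) applied to the sorted tail, while your pointwise bound $\min(t,\delta)^q\leq t^p\delta^{q-p}$ does the same work coordinatewise, with the $q=\infty$ case handled directly as you note. Both maps have image in the same coordinate-subspace arrangement $\bigcup_{|T|\leq N}\mathbb{R}^T$, so the dimension count is identical; yours is a bit easier to verify end to end.
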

This result makes a sharp contrast with the above (\ref{widim estimate for 1 leq q leq p}).
For the simplex $\Delta^{n-1} \subset \mathbb{R}^n$ we have 
\[ \Widim_\varepsilon \Delta^{n-1} \leq \Widim_\varepsilon (B_{\ell^1}(\mathbb{R}^n), d_{\ell^2})
\leq \lceil (2/\varepsilon)^2 \rceil -1 .\]
Therefore (\ref{eq: widim for simplex}) does not hold.
Actually this result means that the ``macroscopic dimension" of $\Delta^{n-1}$  becomes constant 
for large $n$.

When $q =\infty$, we can prove that the inequality (\ref{eq: main inequality}) actually becomes an equality:
\begin{corollary}\label{cor: when q = infty}
For $1\leq p <\infty$,
\[ \Widim_\varepsilon (B_{\ell^p}(\mathbb{R}^n), d_{\ell^\infty}) 
= \min (n, \lceil (2/\varepsilon)^p \rceil -1 ) .\]
\end{corollary}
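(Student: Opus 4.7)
The upper bound is immediate: specializing Theorem~\ref{main theorem} to $q=\infty$ gives $r = p$, whence $\Widim_\varepsilon(B_{\ell^p}(\mathbb{R}^n), d_{\ell^\infty}) \leq \min(n, \lceil (2/\varepsilon)^p\rceil - 1)$. The corollary therefore reduces to the matching lower bound, and my plan is to locate inside $B_{\ell^p}(\mathbb{R}^n)$ an axis-parallel sub-cube whose $\ell^\infty$-Widim is exactly $m := \min(n, \lceil(2/\varepsilon)^p\rceil - 1)$, then invoke monotonicity.

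Concretely, I would set
$$C := [-m^{-1/p}, m^{-1/p}]^m \times \{0\}^{n-m} \subset \mathbb{R}^n.$$
Each $x \in C$ satisfies $\sum_k |x_k|^p \leq m \cdot m^{-1} = 1$, so $C \subset B_{\ell^p}(\mathbb{R}^n)$, and $C$ with the induced $\ell^\infty$-metric is isometric to the standard $m$-cube of side $2 m^{-1/p}$ under its own $\ell^\infty$-metric. The classical Widim inequality for sup-norm cubes (which is the $p=\infty$ analogue of (\ref{eq: widim inequality}), provable by the same references cited there) says that such a cube has $\Widim_\varepsilon = m$ whenever $\varepsilon < 2m^{-1/p}$. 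Combined with the trivial monotonicity $\Widim_\varepsilon(A,d) \leq \Widim_\varepsilon(X,d)$ for $A \subset X$ (any $\varepsilon$-embedding of $X$ restricts to one of $A$), this produces the desired lower bound.

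The last thing to verify is the arithmetic condition $\varepsilon < 2m^{-1/p}$, equivalently $m < (2/\varepsilon)^p$. Since $m \leq \lceil (2/\varepsilon)^p\rceil - 1$ and $\lceil x \rceil - 1 < x$ holds for every real $x$, this is automatic. I do not anticipate any real obstacle: the corollary drops out as a clean consequence of the theorem plus the cube Widim inequality, and the reason both bounds coincide precisely when $q = \infty$ is that the extremal configuration implicit in the proof of Theorem~\ref{main theorem} (many coordinates of equal $\ell^p$-mass) is realized here by an honest Euclidean sub-cube that embeds isometrically into $(\mathbb{R}^n, d_{\ell^\infty})$.
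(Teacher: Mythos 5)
Your proof is correct and follows essentially the same route as the paper: both establish the lower bound by embedding an $m$-dimensional sup-norm cube into $B_{\ell^p}(\mathbb{R}^n)$ and invoking the cube Widim equality (the paper's Lemma~\ref{lemma: widim of l^infty ball}) together with monotonicity of $\Widim_\varepsilon$ under inclusion. The only cosmetic difference is the choice of cube radius --- you take the largest admissible one, $m^{-1/p}$, while the paper takes $\rho$ slightly above $\varepsilon/2$ --- which has no effect on the argument.
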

This result was already obtained by A. Gournay \cite[Proposition 1.3]{Gournay};
see Remark \ref{remark about the paper} at the end of the introduction.
For general $q >p$, I don't have an exact formula. But we can prove the following 
asymptotic result as a corollary of Theorem \ref{main theorem}.
\begin{corollary} \label{cor: asymptotics}
For $1\leq p < q \leq \infty$,
\[ \lim_{\varepsilon \to 0} 
\left( \lim_{n\to \infty} \log \Widim_{\varepsilon}(B_{\ell^p}(\mathbb{R}^n), d_{\ell^q}) / |\log \varepsilon | \right)
= r = \frac{pq}{q-p} .\]
Note that the limit $\lim_{n\to \infty} \log \Widim_{\varepsilon}(B_{\ell^p}(\mathbb{R}^n), d_{\ell^q})$ exists  
because $\Widim_{\varepsilon}(B_{\ell^p}(\mathbb{R}^n), d_{\ell^q})$ is monotone non-decreasing in $n$ and has an 
upper bound independent of $n$.
\end{corollary}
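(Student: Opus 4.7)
The plan is to sandwich $\log\Widim_\varepsilon(B_{\ell^p}(\mathbb{R}^n),d_{\ell^q})/|\log\varepsilon|$ between two quantities that both tend to $r$ as $\varepsilon\to 0$ (after first letting $n\to\infty$). The upper bound will come directly from Theorem \ref{main theorem}, and the lower bound from a rescaled inclusion of an $\ell^q$-ball inside the $\ell^p$-ball combined with the ``Widim inequality'' (\ref{widim estimate for 1 leq q leq p}).

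For the upper bound, I would let $n\to\infty$ in (\ref{eq: main inequality}) to obtain
\[
\lim_{n\to\infty}\Widim_\varepsilon(B_{\ell^p}(\mathbb{R}^n),d_{\ell^q}) \le \lceil (2/\varepsilon)^r\rceil-1;
\]
existence of the limit is simultaneously guaranteed by monotonicity in $n$ (justified in the next paragraph) together with this finite, $n$-independent upper bound. Taking $\log$ and dividing by $|\log\varepsilon|$ yields $r+o(1)$ as $\varepsilon\to 0$.

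For the lower bound, the classical H\"older inequality gives $\|x\|_p\le n^{1/r}\|x\|_q$ for $x\in\mathbb{R}^n$, and hence the inclusion $n^{-1/r}B_{\ell^q}(\mathbb{R}^n)\subseteq B_{\ell^p}(\mathbb{R}^n)$. Since the restriction of an $\varepsilon$-embedding to a subspace remains an $\varepsilon$-embedding (with respect to the same ambient metric), $\Widim_\varepsilon$ is monotone under inclusion of subsets sharing a common metric. Combining this with the trivial scaling identity $\Widim_\varepsilon(\lambda X,d)=\Widim_{\varepsilon/\lambda}(X,d)$ and with the diagonal case $p=q$ of (\ref{widim estimate for 1 leq q leq p}) yields
\[
\Widim_\varepsilon(B_{\ell^p}(\mathbb{R}^n),d_{\ell^q})\ \ge\ \Widim_\varepsilon(n^{-1/r}B_{\ell^q}(\mathbb{R}^n),d_{\ell^q})\ =\ \Widim_{\varepsilon n^{1/r}}(B_{\ell^q}(\mathbb{R}^n),d_{\ell^q})\ =\ n
\]
whenever $\varepsilon n^{1/r}<1$, i.e.\ $n<\varepsilon^{-r}$. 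Taking $n=\lfloor\varepsilon^{-r}\rfloor$ and using monotonicity of $\Widim_\varepsilon$ in $n$ (coming from $B_{\ell^p}(\mathbb{R}^n)\subseteq B_{\ell^p}(\mathbb{R}^{n+1})$ realized by embedding $\mathbb{R}^n$ as the first $n$ coordinates), I would conclude $\lim_n\Widim_\varepsilon(B_{\ell^p}(\mathbb{R}^n),d_{\ell^q})\ge\lfloor\varepsilon^{-r}\rfloor$, giving the matching lower bound $\ge r-o(1)$ after taking $\log$ and dividing by $|\log\varepsilon|$.

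I do not expect any serious obstacle here: the substantive input is Theorem \ref{main theorem} itself, and the remaining ingredients—H\"older's inequality, elementary monotonicity and scaling properties of $\Widim_\varepsilon$, and the $p=q$ case of (\ref{widim estimate for 1 leq q leq p})—are routine. If anything counts as the ``hard'' step, it is only the bookkeeping of the optimal choice $n\asymp\varepsilon^{-r}$ that matches the exponent in the upper bound.
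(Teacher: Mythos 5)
Your proposal is correct and follows essentially the same route as the paper: the upper bound comes from Theorem \ref{main theorem}, and the lower bound from H\"older's inequality together with the $p=q$ Widim identity (the paper packages this as Proposition \ref{prop: lower bound}, obtaining $B_{\ell^q}(\mathbb{R}^m,\rho)\subset B_{\ell^p}(\mathbb{R}^n)$ by choosing $\rho$ slightly larger than $\varepsilon$, which is equivalent to your dilation-and-rescaling argument). The only cosmetic difference is that the paper states the lower bound as $\min(n,\lceil\varepsilon^{-r}\rceil-1)$ directly, whereas you pick the optimal $n\approx\varepsilon^{-r}$ and then invoke monotonicity in $n$; and note that when $\varepsilon^{-r}$ is an integer your choice $n=\lfloor\varepsilon^{-r}\rfloor$ just fails the strict inequality $\varepsilon n^{1/r}<1$, so one should take $n=\lceil\varepsilon^{-r}\rceil-1$ as in the paper, though this does not affect the asymptotics.
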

\begin{remark}
Gournay \cite[Example 3.1]{Gournay} shows
$\Widim_\varepsilon (B_{\ell^1}(\mathbb{R}^2), d_{\ell^p}) = 2$
for $\varepsilon < 2^{1/p}$.
\end{remark}

\subsection{Mean dimension theory}
Theorem \ref{main theorem} has an application to Gromov's mean dimension theory.
Let $\Gamma$ be a infinite countable group.
For $1\leq p \leq \infty$, let $\ell^p(\Gamma) \subset \mathbb{R}^\Gamma$ be the 
$\ell^p$-space, $B(\ell^p(\Gamma)) \subset \ell^p(\Gamma)$ the unit ball 
(in the $\ell^p$-norm).
We consider the natural right action of $\Gamma$ on $\ell^p(\Gamma)$ (and $B(\ell^p(\Gamma))$):
\[ (x\cdot\delta)_\gamma := x_{\delta\gamma} \quad 
\text{for $x= (x_\gamma)_{\gamma\in \Gamma}\in \ell^p(\Gamma)$ and $\delta\in\Gamma$} . \]
We give the standard product topology on $\mathbb{R}^\Gamma$, and consider the restriction of this 
topology to $B(\ell^p(\Gamma)) \subset \mathbb{R}^\Gamma$.
(This topology coincides with the restriction of weak topology of $\ell^p(\Gamma)$ 
for $p>1$.)
Then $B(\ell^p(\Gamma))$ becomes compact and metrizable.
(The $\Gamma$-action on $B(\ell^p(\Gamma))$ is continuous.)
Let $d$ be the distance on $B(\ell^p(\Gamma))$ compatible with the topology.
For a finite subset $\Omega \subset \Gamma$ we define a distance $d_\Omega$ 
on $B(\ell^p(\Gamma))$ by 
\[d_\Omega(x,y) := \max_{\gamma\in \Omega} d(x\gamma, y\gamma) .\]
We are interested in the growth behavior of 
$\Widim_\varepsilon (B(\ell^p(\Gamma)), d_\Omega)$ as $|\Omega| \to \infty$.
In particular, if $\Gamma$ is finitely generated and has an amenable sequence $\{\Omega_i\}_{i\geq 1}$
(in the sense of \cite[p. 335]{Gromov}), the mean dimension is defined by 
(see \cite[pp. 335-336]{Gromov})
\[ \dim(B(\ell^p(\Gamma)) :\Gamma) = \lim_{\varepsilon\to 0}
 \lim_{i\to \infty} \Widim_\varepsilon (B(\ell^p (\Gamma )), d_{\Omega_i})/|\Omega_i| .\]
As a corollary of Theorem \ref{main theorem}, we get the following:
\begin{corollary}\label{corollary: mean dimension}
For $1\leq p < \infty$ and any $\varepsilon >0$, 
there is a positive constant $C(d, p, \varepsilon) <\infty$ 
(independent of $\Omega$) such that 
\begin{equation}\label{eq: stability of widim}
 \Widim_\varepsilon(B(\ell^p(\Gamma)), d_\Omega) \leq C(d, p, \varepsilon) 
\quad \text{for all finite set $\Omega \subset \Gamma$} .
\end{equation}
Namely, $\Widim_\varepsilon (B(\ell^p(\Gamma)), d_\Omega)$ becomes stable for large $\Omega \subset \Gamma$.
In particular, for a finitely generated infinite amenable group $\Gamma$
\begin{equation}\label{eq: mean dimension = 0}
\dim(B(\ell^p(\Gamma)) :\Gamma) = 0.
\end{equation}
\end{corollary}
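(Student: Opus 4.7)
My plan is to reduce to Theorem \ref{main theorem} with $q=\infty$, exploiting the crucial fact that the bound $\lceil(2/\varepsilon)^p\rceil-1$ there is independent of the ambient dimension~$n$.

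First I would use compactness of $B(\ell^p(\Gamma))$, together with the fact that $d$ metrizes the product topology, to show: for every $\varepsilon>0$ there exist a finite subset $F\subset\Gamma$ and $\eta>0$ such that for $x,y\in B(\ell^p(\Gamma))$,
\[ \max_{\gamma\in F}|x_\gamma-y_\gamma|\leq \eta \;\Longrightarrow\; d(x,y)\leq \varepsilon. \]
The reasoning is standard: if no such $(F,\eta)$ existed, a sequence of counterexamples would subconverge (in the compact product topology) to a diagonal pair, contradicting that $d$ metrizes the topology. Using the $\Gamma$-equivariance $(x\sigma)_\gamma=x_{\sigma\gamma}$, the finite set $F':=\Omega\cdot F=\{\sigma\gamma:\sigma\in\Omega,\ \gamma\in F\}$ then controls $d_\Omega$:
\[ \max_{\xi\in F'}|x_\xi-y_\xi|\leq \eta \;\Longrightarrow\; d_\Omega(x,y)\leq \varepsilon. \]

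Next I would apply Theorem \ref{main theorem} to the coordinate projection $\pi:B(\ell^p(\Gamma))\to \mathbb{R}^{F'}$. Since $\sum_{\xi\in F'}|x_\xi|^p\leq 1$ for $x\in B(\ell^p(\Gamma))$, the image lies in $B_{\ell^p}(\mathbb{R}^{F'})$, and Theorem \ref{main theorem} with $q=\infty$ (hence $r=p$) produces an $\eta$-embedding $f:(B_{\ell^p}(\mathbb{R}^{F'}),d_{\ell^\infty})\to P$ into a polyhedron $P$ of dimension at most $\lceil(2/\eta)^p\rceil-1$. This bound is independent of $|F'|$, which is exactly why the argument works. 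The composition $f\circ\pi$ is then an $\varepsilon$-embedding of $(B(\ell^p(\Gamma)),d_\Omega)$ into $P$: for any two points in a common fiber of $f\circ\pi$, their projections under $\pi$ lie in a single fiber of $f$, hence have $d_{\ell^\infty}$-distance $\leq\eta$ on the coordinates in $F'$, and the implication above upgrades this to $d_\Omega\leq\varepsilon$. This establishes (\ref{eq: stability of widim}) with $C(d,p,\varepsilon)=\lceil(2/\eta)^p\rceil-1$.

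Finally, (\ref{eq: mean dimension = 0}) is immediate: for any amenable sequence $\{\Omega_i\}$ in a finitely generated infinite amenable group one has $|\Omega_i|\to\infty$, so by (\ref{eq: stability of widim}) the ratio $\Widim_\varepsilon(B(\ell^p(\Gamma)),d_{\Omega_i})/|\Omega_i|$ tends to $0$ for every fixed $\varepsilon$, whence the mean dimension vanishes. The substantive content is entirely packed into Theorem \ref{main theorem}; the only step requiring genuine care is the compactness argument in the first paragraph, and I expect no real obstacle once one notices that the Theorem's bound is dimension-free when $q=\infty$.
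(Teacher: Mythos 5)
Your proof is correct and follows essentially the same strategy as the paper: project $B(\ell^p(\Gamma))$ onto finitely many coordinates, apply Theorem~\ref{main theorem} with $q=\infty$ (whose bound is independent of the ambient dimension), and control the discarded tail to upgrade the finite-coordinate $\ell^\infty$-bound to a $d_\Omega$-bound. The only difference is cosmetic: the paper fixes a specific weighted-$\ell^1$ metric $d(x,y)=\sum_\gamma w(\gamma)|x_\gamma-y_\gamma|$ and computes the tail estimate explicitly (invoking compactness at the outset to transfer the result to arbitrary compatible metrics), whereas you handle an arbitrary compatible metric directly via an abstract compactness argument producing the pair $(F,\eta)$.
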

(\ref{eq: mean dimension = 0}) is the answer to the question of Gromov in \cite[p. 340]{Gromov}.
Actually the above (\ref{eq: stability of widim}) is much stronger than (\ref{eq: mean dimension = 0}).
\begin{remark}\label{remark about the paper}
This paper is a revised version of the preprint \cite{Tsukamoto}.
A referee of \cite{Tsukamoto} pointed out that the above (\ref{eq: mean dimension = 0}) can be derived from 
the theorem of Lindenstrauss-Weiss \cite[Theorem 4.2]{Lindenstrauss-Weiss}.
This theorem tells us that if the topological entropy is finite then the mean dimension becomes $0$.
We can see that the topological entropy of $B(\ell^p(\Gamma))$ (under the $\Gamma$-action)
is $0$. Hence the mean dimension also becomes $0$.
I am most grateful to the referee of \cite{Tsukamoto} for pointing out this argument.
The essential part of the proof of Theorem \ref{main theorem} (and Corollary \ref{cor: when q = infty} and
Corollary \ref{cor: asymptotics})
is the construction of the continuous map $f:\mathbb{R}^n \to \mathbb{R}^n$ described in Section 
\ref{sec: main construction}.
This construction was already given in the preprint \cite{Tsukamoto}.
When I was writing this revised version of \cite{Tsukamoto}, 
I found the paper of A. Gournay \cite{Gournay}. \cite{Gournay} proves Corollary \ref{cor: when q = infty}
(\cite[Proposition 1.3]{Gournay})
by using essentially the same continuous map as mentioned above.
I submitted the paper \cite{Tsukamoto} to a certain journal in June of 2007
before \cite{Gournay} appeared on the arXiv in November of 2007.
And \cite{Tsukamoto} is quoted as one of the references in \cite{Gournay}.
\end{remark}


\section{preliminaries}
\begin{lemma}\label{preliminary 1}
For $s\geq 1$ and $x, y, z\geq 0$, if $x\geq y$, then 
\[ x^s + (y+z)^s \leq (x+z)^s + y^s .\] 
\end{lemma}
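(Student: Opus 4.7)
The plan is to rewrite the claimed inequality in the equivalent form
\[ (x+z)^s - x^s \;\geq\; (y+z)^s - y^s, \]
so that it suffices to show the auxiliary one-variable function
\[ g(t) := (t+z)^s - t^s \]
is non-decreasing on $[0,\infty)$ for every fixed $z \geq 0$ and $s \geq 1$. Once this is established, substituting $t=x$ and $t=y$ and using the hypothesis $x \geq y$ immediately yields $g(x) \geq g(y)$, which is exactly the lemma.

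To verify the monotonicity of $g$, I would just differentiate:
\[ g'(t) \;=\; s\bigl[(t+z)^{s-1} - t^{s-1}\bigr]. \]
Because $s \geq 1$, the exponent $s-1$ is non-negative, so $u \mapsto u^{s-1}$ is non-decreasing on $[0,\infty)$, and then $t+z \geq t \geq 0$ forces $g'(t) \geq 0$. Equivalently, one can phrase this as the standard fact that the convexity of the function $u \mapsto u^s$ (for $s\geq 1$) on $[0,\infty)$ makes its increments $g(t)=(t+z)^s-t^s$ non-decreasing in $t$. No essential obstacle arises; the lemma is really just the monotonicity of differences of a convex power function, and the only minor point of care is to allow $s=1$ (in which case $g$ is constant and the inequality is an equality) and to verify the case $z=0$ or $y=0$, both of which fit into the same argument without modification.
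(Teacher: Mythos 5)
Your argument is exactly the paper's: both define the auxiliary function $g(t)=(t+z)^s-t^s$, differentiate to get $g'(t)=s[(t+z)^{s-1}-t^{s-1}]\geq 0$ for $s\geq 1$, and conclude $g(y)\leq g(x)$ from $y\leq x$. The proof is correct and takes essentially the same approach.
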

\begin{proof}
Set $\varphi(t) := (t+z)^s - t^s$ ($t\geq 0$).
Then $\varphi'(t) = s\{ (t+z)^{s-1} - t^{s-1}\} \geq 0$.
Hence $\varphi(y)\leq \varphi(x)$, i.e., 
$(y+z)^s - y^s \leq (x+z)^s - x^s$.
\end{proof}
\begin{lemma}\label{key lemma}
Let $s\geq 1$ and $c, t\geq 0$.
If real numbers $x_1, \cdots, x_n$ $(n\geq 1)$ satisfies 
\[ x_1 + \cdots + x_n \leq c, \quad 0\leq x_i \leq t \> (1\leq i \leq n) ,\]
then 
\[ x_1^s + \cdots + x_n^s \leq c\cdot t^{s-1} .\]
\end{lemma}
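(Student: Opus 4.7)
The plan is a direct pointwise bound that bypasses Lemma \ref{preliminary 1} entirely. The key observation is that since $0 \le x_i \le t$ and the exponent $s-1$ is nonnegative, one has $x_i^{s-1} \le t^{s-1}$ for each $i$. Multiplying this inequality by the nonnegative factor $x_i$ yields $x_i^s \le x_i \cdot t^{s-1}$, and summing over $i = 1, \ldots, n$ together with $\sum_i x_i \le c$ gives
\[ \sum_{i=1}^n x_i^s \;\le\; t^{s-1} \sum_{i=1}^n x_i \;\le\; c \cdot t^{s-1}. \]
The degenerate cases $t = 0$ (forcing every $x_i = 0$) and $s = 1$ (where the claim reduces to the hypothesis $\sum_i x_i \le c$) collapse to trivial checks and can be dispatched in a single sentence.

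Since the argument is essentially one line, there is no real obstacle; the only question is whether to present the lemma as a consequence of Lemma \ref{preliminary 1} or independently. A mass-transfer reading of Lemma \ref{preliminary 1} would run as follows: among tuples $(x_1,\ldots,x_n)$ obeying the constraints, $\sum_i x_i^s$ is maximized when as much mass as possible is pushed to the upper bound $t$, because if two coordinates $x_i \ge x_j$ both lie in $(0,t)$, then shifting a small amount $\delta$ from $x_j$ to $x_i$ strictly increases $\sum_i x_i^s$ by Lemma \ref{preliminary 1} (apply it with $x = x_i$, $y = x_j - \delta$, $z = \delta$). Iterating reduces to a tuple with at most one coordinate in $(0,t)$ and the others in $\{0,t\}$, and the bound $c \cdot t^{s-1}$ then follows by an explicit count. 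I would nonetheless prefer the direct one-line proof above on grounds of cleanness, leaving Lemma \ref{preliminary 1} for what I expect is a later use in the main construction of the paper.
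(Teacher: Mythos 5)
Your one-line proof is correct and strictly simpler than the paper's. The paper splits into cases $nt \le c$ and $nt > c$, and in the second case invokes Lemma \ref{preliminary 1} to transfer mass toward the extremal configuration $(t,\dots,t,c-mt,0,\dots,0)$ with $m = \lfloor c/t \rfloor$, and only then applies a pointwise bound to the one leftover term $(c-mt)^s \le t^{s-1}(c-mt)$. Your observation is that the pointwise bound $x_i^s = x_i \cdot x_i^{s-1} \le x_i \cdot t^{s-1}$ already holds for \emph{every} coordinate, so summing and using $\sum_i x_i \le c$ finishes immediately, with no case split and no appeal to Lemma \ref{preliminary 1}. Since Lemma \ref{preliminary 1} is used nowhere else in the paper, your proof would let it be deleted entirely. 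The only price is that your argument does not exhibit the extremal configuration explicitly, but the lemma as stated asks only for the upper bound, so nothing is lost. (Your handling of $t=0$ and $s=1$ is also fine: for $t=0$ all $x_i$ vanish and both sides are $0$; for $s=1$ the claim is just the hypothesis $\sum_i x_i \le c$.)
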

\begin{proof}
First we suppose $nt\leq c$. Then $x_1^s + \cdots + x_n^s \leq n\cdot t^s \leq c\cdot t^{s-1}$.

Next we suppose $nt > c$. Let $m := \lfloor c/t \rfloor$ be the maximum integer satisfying $mt\leq c$.
We have $0\leq m < n$ and $c-mt < t$.
Using Lemma \ref{preliminary 1}, we have 
\[ x_1^s + \cdots + x_n^s \leq \underbrace{t^s + \cdots + t^s}_{m} + (c-mt)^s
 \leq m t^s + t^{s-1}(c-mt) \leq c\cdot t^{s-1} .\]
\end{proof}


\section{Proof of Theorem \ref{main theorem}} \label{sec: main construction}
Let $S_n$ be the $n$-th symmetric group.
We define the group $G$ by 
\[ G := \{\pm 1\}^{n} \rtimes S_n .\]
The multiplication in $G$ is given by
\[ ((\varepsilon_1, \cdots, \varepsilon_n ), \sigma ) \cdot
((\varepsilon'_1, \cdots, \varepsilon'_n ), \sigma' ) :=
((\varepsilon_1 \varepsilon'_{\sigma^{-1}(1)}, \cdots, \varepsilon_n \varepsilon'_{\sigma^{-1}(n)}),
\sigma \sigma') \]
where $\varepsilon_1, \cdots, \varepsilon_n, \varepsilon'_1 \cdots, \varepsilon'_n \in \{\pm1\}$ 
and $\sigma, \sigma' \in S_n$.
$G$ acts on $\mathbb{R}^n$ by 
\[ ((\varepsilon_1, \cdots, \varepsilon_n ), \sigma ) \cdot (x_1, \cdots, x_n) 
:= (\varepsilon_1 x_{\sigma^{-1}(1)}, \cdots, \varepsilon_n x_{\sigma^{-1}(n)}) \]
where $((\varepsilon_1, \cdots, \varepsilon_n ), \sigma ) \in G$ 
and $(x_1, \cdots, x_n) \in \mathbb{R}^n$.
The action of $G$ on $\mathbb{R}^n$ preserves 
the $\ell^p$-ball $B_{\ell^p}(\mathbb{R}^n)$ and the $\ell^q$-distance $d_{\ell^q}(\cdot, \cdot)$.

We define $\mathbb{R}^n_{\geq 0}$ and $\varLambda_n$ by 
\begin{equation*}
 \begin{split}
 \mathbb{R}^n_{\geq 0} &:= \{ (x_1, \cdots, x_n)\in \mathbb{R}^n |\, x_i \geq 0 \; (1\leq i \leq n) \}, \\
 \varLambda_n &:= \{(x_1, \cdots, x_n) \in \mathbb{R}^n|\, x_1 \geq x_2 \geq \cdots \geq x_n\geq 0 \}.
 \end{split}
\end{equation*} 
The following can be easily checked:
\begin{lemma}\label{lemma: fixed point of G}
For $\varepsilon \in \{\pm 1\}^n$ and $x\in \mathbb{R}^n_{\geq 0}$, 
if $\varepsilon x\in \mathbb{R}^n_{\geq 0}$, then $\varepsilon x = x$.
For $\sigma\in S_n$ and $x\in \varLambda_n$, if $\sigma x\in \varLambda_n$, then $\sigma x = x$.
For $g = (\varepsilon, \sigma) \in G$ and $x\in \varLambda_n$, if $g x \in \varLambda_n$, then
$gx = \varepsilon (\sigma x) = \sigma x = x$.
\end{lemma}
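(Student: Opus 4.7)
The plan is to verify the three assertions in order; each reduces to a short sign or ordering observation, and the third is just a combination of the first two.

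For the first assertion I would argue componentwise. If $x_i\geq 0$ and $\varepsilon_i x_i\geq 0$, then either $x_i=0$ (in which case $\varepsilon_i x_i=0=x_i$ automatically) or $x_i>0$, in which case non-negativity of $\varepsilon_i x_i$ forces $\varepsilon_i=+1$ and again $\varepsilon_i x_i=x_i$.

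For the second assertion the key fact is that a weakly decreasing rearrangement of a finite multiset of non-negative reals is unique. Both $x$ and $\sigma x$ carry the same multiset of entries (since $\sigma$ merely permutes coordinates) and both lie in $\varLambda_n$, so an easy induction on the index $i$---at each step, the $i$-th largest element of the common multiset is pinned down---gives $(\sigma x)_i=x_i$ coordinate by coordinate.

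The third assertion then follows by chaining the first two. The semidirect-product action is defined so that $gx=\varepsilon\cdot(\sigma x)$; since $x\in\varLambda_n\subset\mathbb{R}^n_{\geq 0}$ we have $\sigma x\in\mathbb{R}^n_{\geq 0}$, and by hypothesis $gx=\varepsilon(\sigma x)\in\varLambda_n\subset\mathbb{R}^n_{\geq 0}$. Applying the first assertion to the vector $\sigma x$ with sign pattern $\varepsilon$ gives $\varepsilon(\sigma x)=\sigma x$, so $gx=\sigma x\in\varLambda_n$; then the second assertion forces $\sigma x=x$, whence $gx=x$. I do not anticipate any real obstacle here: the lemma is a bundle of elementary sign/ordering facts, and the only mildly delicate point is tracking the semidirect-product convention carefully enough to confirm the factorization $gx=\varepsilon(\sigma x)$ used in the last step.
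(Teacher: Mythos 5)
Your proof is correct. The paper gives no proof of this lemma (it only says ``the following can be easily checked''), and your argument is exactly the natural one: componentwise sign analysis for the first assertion, uniqueness of the weakly decreasing rearrangement for the second, and the factorization $gx=\varepsilon(\sigma x)$ (which does match the semidirect-product action $(gx)_i=\varepsilon_i x_{\sigma^{-1}(i)}$ defined in the paper) to chain the two for the third.
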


Let $m, n$ be positive integers such that $1 \leq m < n$.
We define the continuous map $f_0:\varLambda_n \to \varLambda_n$ by 
\[ f_0(x_1, \cdots, x_n) := 
(x_1 -x_{m+1}, x_2 - x_{m+1}, \cdots, x_m - x_{m+1}, \underbrace{0, 0, \cdots, 0}_{n-m}) .\]
The following is the key fact for our construction:
\begin{lemma}\label{lemma: key for the construction}
For $g\in G$ and $x\in \varLambda_n$, if $gx\in \varLambda_n$ ($\Rightarrow gx =x$), then we have
\[ f_0(gx) = g f_0(x) .\]
\end{lemma}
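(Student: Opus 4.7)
The plan is to use Lemma \ref{lemma: fixed point of G} immediately to reduce the identity $f_0(gx) = g f_0(x)$ to the fixed-point statement $g f_0(x) = f_0(x)$. Indeed, the hypothesis $gx \in \varLambda_n$ forces $gx = x$, so $f_0(gx) = f_0(x)$, and everything comes down to checking that $y := f_0(x)$ is fixed by $g = (\varepsilon, \sigma)$.

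First I would extract the concrete consequences of $gx = x$ on $\varLambda_n$. By Lemma \ref{lemma: fixed point of G}, we have $\sigma x = x$ and $\varepsilon x = x$ separately. The relation $\sigma x = x$ says $\sigma$ permutes coordinates within each level set of $x$, while $\varepsilon x = x$ forces $\varepsilon_i = 1$ on every index $i$ with $x_i \neq 0$.

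Next I would verify $\sigma y = y$, i.e., $y_{\sigma^{-1}(i)} = y_i$ for each $i$. There are four sub-cases depending on whether $i$ and $\sigma^{-1}(i)$ lie in $\{1,\ldots,m\}$ or in $\{m+1,\ldots,n\}$. The two ``diagonal'' cases are immediate: if both are $\leq m$ then $y_{\sigma^{-1}(i)} = x_{\sigma^{-1}(i)} - x_{m+1} = x_i - x_{m+1} = y_i$, and if both are $> m$ then both values are zero. The only real point requires a short argument: if exactly one of $i, \sigma^{-1}(i)$ lies in $\{1,\ldots,m\}$, then $x_i = x_{\sigma^{-1}(i)}$ combined with $x_1 \geq \cdots \geq x_n$ pins both values down to $x_{m+1}$, whence $y_i = y_{\sigma^{-1}(i)} = 0$. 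Finally I would check $\varepsilon y = y$: if $y_i > 0$ then necessarily $i \leq m$ and $x_i > x_{m+1} \geq 0$, so $x_i \neq 0$, and the earlier observation yields $\varepsilon_i = 1$; if $y_i = 0$ there is nothing to check.

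Because everything is already set up by Lemma \ref{lemma: fixed point of G}, I do not expect any genuine obstacle. The one subtle point to be careful about is that $\sigma$ need not stabilise the set $\{1,\ldots,m\}$ when $x_m = x_{m+1}$; it is exactly on the ambiguous indices where $x_i = x_{m+1}$ that $y_i$ vanishes, which is precisely what makes the construction of $f_0$ well-chosen and what rescues the equivariance.
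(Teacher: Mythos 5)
Your proof is correct, and the combinatorial core (permuted indices crossing the threshold $m$ must carry the value $x_{m+1}$, so the corresponding $y$-entries vanish; the $\varepsilon$-part is trivial on nonzero entries) is the same as the paper's. The organization is slightly different: the paper proves $f_0(\varepsilon x)=\varepsilon f_0(x)$ and $f_0(\sigma x)=\sigma f_0(x)$ separately by direct computation of both sides (using an auxiliary index $r$ with $x_{r-1}>x_r=\cdots=x_{m+1}$) and then composes, whereas you reduce at the outset to the fixed-point statement $g\,f_0(x)=f_0(x)$ via $gx=x$, and then verify $\sigma y=y$ and $\varepsilon y=y$; this reorganization is arguably cleaner and entirely valid, since Lemma \ref{lemma: fixed point of G} gives $\sigma x = x$ and $\varepsilon(\sigma x)=\sigma x$, hence also $\varepsilon x = x$.
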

\begin{proof}
First we consider the case of
$g = \varepsilon = (\varepsilon_1, \cdots, \varepsilon_n) \in \{\pm 1\}^n$.
If $x_{m+1} =0$, then
\[ f_0(\varepsilon x) = (\varepsilon_1 x_1, \cdots, \varepsilon_m x_m , 0, \cdots, 0) 
= \varepsilon f_0(x).\]
If $x_{m+1} >0$, then $\varepsilon_i = 1\; (1\leq i\leq m+1)$ because 
$\varepsilon_i x_i = x_i \geq x_{m+1} >0 \; (1\leq i\leq m+1)$.
Hence 
\[ f_0(\varepsilon x) = (x_1-x_{m+1}, \cdots, x_m - x_{m+1}, 0, \cdots, 0) = f_0(x) = 
\varepsilon f_0(x) .\]

Next we consider the case of $g = \sigma \in S_n$. $gx\in \varLambda_n$ implies 
$x_{\sigma(i)} = x_i \; (1\leq i\leq n)$. Set $y:= f_0(x)$.
Let $r\; (1\leq r\leq m+1)$ be the integer such that
\[ x_{r-1} > x_r = x_{r+1} = \cdots =x_{m+1} .\]
From $x_{\sigma(i)} = x_i \; (1\leq i\leq n)$, we have
\begin{equation*}
 \begin{split}
 &1\leq i < r\Rightarrow 1\leq \sigma(i) < r \Rightarrow y_{\sigma(i)} = x_{\sigma(i)} - x_{m+1} = y_i, \\
 &r\leq i \Rightarrow r\leq \sigma(i) \Rightarrow y_{\sigma(i)} = 0 = y_i.
 \end{split}
\end{equation*}
Hence we have $f_0(\sigma x) =  f_0(x) = \sigma f_0(x)$.

Finally we consider the case of $g =(\varepsilon, \sigma)\in G$.
Since $gx\in \varLambda_n$, we have $gx = \varepsilon (\sigma x) = \sigma x = x \in \varLambda_n$ 
(see Lemma \ref{lemma: fixed point of G}). Hence
\[ f_0(gx) = f_0(\varepsilon (\sigma x )) = \varepsilon f_0(\sigma x ) 
= \varepsilon \sigma f_0(x) = g f_0(x) .\]
\end{proof}
We define a continuous map $f:\mathbb{R}^n \to \mathbb{R}^n$ as follows;
For any $x\in \mathbb{R}^n$, there is a $g\in G$ such that $gx\in \varLambda_n$.
Then we define 
\[f(x):= g^{-1}f_0(gx).\]
From Lemma \ref{lemma: key for the construction}, this definition is well-defined. 
Since $\mathbb{R}^n = \bigcup_{g\in G} g\varLambda_n$ and
$f|_{g\varLambda_n} = g f_0 g^{-1} \; (g\in G)$ is continuous on $g\varLambda_n$, $f$ is continuous on $\mathbb{R}^n$.
Moreover $f$ is $G$-equivariant.

\begin{proposition}\label{proposition: key estimate}
Let $1 \leq p < q\leq \infty$.
For any $x\in B_{\ell^p}(\mathbb{R}^n)$, we have
\[ d_{\ell^q} (x, f(x)) \leq \left( \frac{1}{m+1}\right)^{\frac{1}{p} - \frac{1}{q}} .\]
Note that the right-hand side does not depend on $n$.
\end{proposition}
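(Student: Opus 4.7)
The plan is to exploit the $G$-equivariance of $f$ and reduce to a representative in $\varLambda_n$. Both $B_{\ell^p}(\mathbb{R}^n)$ and $d_{\ell^q}$ are $G$-invariant (the action is by signed permutations), so $d_{\ell^q}(x, f(x)) = d_{\ell^q}(gx, g f(x)) = d_{\ell^q}(gx, f(gx))$ for every $g \in G$, and each $x \in \mathbb{R}^n$ has a $G$-translate in $\varLambda_n$; hence it suffices to prove the inequality when $x \in \varLambda_n$, where $f = f_0$. A direct subtraction gives
\[
x - f_0(x) \;=\; (\,\underbrace{x_{m+1}, x_{m+1}, \ldots, x_{m+1}}_{m+1\ \text{entries}},\; x_{m+2}, \ldots, x_n\,),
\]
so the task reduces to estimating the $\ell^q$-norm of this vector.

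The basic input is a pointwise bound on $x_{m+1}$: monotonicity $x_1 \ge \cdots \ge x_{m+1} \ge 0$ combined with $\sum_{i=1}^n x_i^p \le 1$ gives $(m+1)\,x_{m+1}^p \le \sum_{i=1}^{m+1} x_i^p \le 1$, so $x_{m+1} \le (m+1)^{-1/p}$. This already settles the case $q = \infty$ on its own, since $\vnorm{x-f_0(x)}_\infty = x_{m+1}$ and $1/p - 1/\infty = 1/p$.

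For $1 \le p < q < \infty$ I would invoke Lemma \ref{key lemma} with $s := q/p \ge 1$, applied to the auxiliary vector
\[
(y_1, \ldots, y_n) \;:=\; (\,\underbrace{x_{m+1}^p, \ldots, x_{m+1}^p}_{m+1\ \text{entries}},\; x_{m+2}^p, \ldots, x_n^p\,).
\]
The point of this choice is twofold: every $y_j$ lies in $[0, t]$ with $t := x_{m+1}^p$ (by monotonicity $x_i \le x_{m+1}$ for $i \ge m+1$), and replacing $x_1^p, \ldots, x_{m+1}^p$ by $m+1$ copies of $x_{m+1}^p$ only decreases the sum, giving $\sum_j y_j \le \sum_{i=1}^n x_i^p \le 1$; at the same time the power sum $\sum_j y_j^{q/p}$ equals exactly $\vnorm{x - f_0(x)}_q^q = (m+1)\,x_{m+1}^q + \sum_{i=m+2}^n x_i^q$. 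Lemma \ref{key lemma} then yields $\vnorm{x - f_0(x)}_q^q \le 1 \cdot t^{\,s-1} = x_{m+1}^{\,q-p}$, and combining with $x_{m+1} \le (m+1)^{-1/p}$ produces the desired $\vnorm{x - f_0(x)}_q \le (m+1)^{-(1/p - 1/q)}$.

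There is no serious obstacle: the computation is short and mechanical once the correct auxiliary sequence is in hand. The only creative moment I anticipate is identifying that sequence, i.e.\ realising that flattening the top $m+1$ coordinates down to $x_{m+1}^p$ is exactly the manipulation that (i) matches $\sum_j y_j^{q/p}$ with $\vnorm{x-f_0(x)}_q^q$ and (ii) simultaneously makes the hypothesis $\sum_j y_j \le 1$ of Lemma \ref{key lemma} inherit cleanly from the $\ell^p$-constraint on $x$.
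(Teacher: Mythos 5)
Your proof is correct and follows essentially the same strategy as the paper: reduce to $\varLambda_n$ by $G$-equivariance, identify $x - f_0(x)$ with the vector $(x_{m+1},\ldots,x_{m+1},x_{m+2},\ldots,x_n)$, and invoke Lemma~\ref{key lemma} with $s=q/p$ and $t = x_{m+1}^p$. The only (cosmetic) difference is that the paper applies the lemma just to the tail $(x_{m+2}^p,\ldots,x_n^p)$ with $c = 1-(m+1)t$ and then adds the $(m+1)x_{m+1}^q = (m+1)t^s$ term back by hand (whereupon it cancels), whereas you fold the $m+1$ repeated coordinates into the auxiliary vector and apply the lemma once with $c=1$; your packaging is marginally cleaner and you also spell out the $q=\infty$ case explicitly, which the paper leaves implicit.
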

\begin{proof}
Since $f$ is $G$-equivariant and $d_{\ell^q}$ is $G$-invariant, 
we can suppose $x\in B_{\ell^p}(\mathbb{R}^n)\cap \varLambda_n$, i.e.
$x = (x_1, x_2, \cdots, x_n)$ with $x_1 \geq x_2\geq \cdots \geq x_n\geq 0$.
We have 
\[ f(x) = (x_1 - x_{m+1}, \cdots, x_m - x_{m+1}, 0, \cdots, 0) .\]
Hence 
\[ d_{\ell^q} (x, f(x)) = |\!|(\underbrace{x_{m+1}, \cdots, x_{m+1}}_{m+1},x_{m+2}, \cdots, x_n)|\!|_{\ell^q} .\]
Set $t:= x_{m+1}^p$ and $s:= q/p$. Since $x_1^p + \cdots + x_n^p \leq 1$ and 
$x_1 \geq x_2\geq \cdots \geq x_n\geq 0$, we have $t \leq 1/(m+1)$, $0\leq x_k^p \leq t$ ($m+1\leq k\leq n$)
and $x_{m+2}^p + \cdots + x_n^p \leq 1-(m+1)t$.
Using Lemma \ref{key lemma}, we have
\[ x_{m+2}^q + \cdots + x_n^q \leq \{1-(m+1)t\}t^{s-1} = t^{s-1} - (m+1)t^s .\]
Therefore
\[ d_{\ell^q} (x, f(x))^q = (m+1)x_{m+1}^q + x_{m+2}^q + \cdots + x_n^q \leq t^{s-1} \leq (1/(m+1))^{s-1}. \]
Thus 
\[ d_{\ell^q} (x, f(x)) \leq (1/(m+1))^{1/p -1/q} .\]
\end{proof}
\begin{proof}[Proof of Theorem \ref{main theorem}]
Set $m:= \min (n, \lceil (2/\varepsilon)^r \rceil -1 )$.
We will prove $\Widim_\varepsilon (B_{\ell^p}(\mathbb{R}^n), d_{\ell^q}) \leq m$.
If $n = m$, then the statement is trivial. 
Hence we suppose $n > m = \lceil (2/\varepsilon)^r \rceil -1$.
From $m+1 = \lceil (2/\varepsilon)^r \rceil \geq (2/\varepsilon)^r$ and $1/r = 1/p -1/q$,
\[ 2 \left( \frac{1}{m+1}\right)^{\frac{1}{p} - \frac{1}{q}} \leq \varepsilon .\]
We have
\[ f(\mathbb{R}^n) = \bigcup_{g\in G} g f(\varLambda_n) .\]
Note that $f(\varLambda_n) \subset \mathbb{R}^m := \{(x_1, \cdots, x_m, 0, \cdots, 0)\in \mathbb{R}^n\}$.
Proposition \ref{proposition: key estimate} implies that
\[f|_{B_{\ell^p}(\mathbb{R}^n)}: (B_{\ell^p}(\mathbb{R}^n), d_{\ell^q}) \to 
\bigcup_{g\in G}g\cdot\mathbb{R}^m
\text{ is a $2\left(\frac{1}{m+1}\right)^{\frac{1}{p} - \frac{1}{q}}$-embedding} .\]
Therefore we get $\Widim_\varepsilon (B_{\ell^p}(\mathbb{R}^n), d_{\ell^q}) \leq m$.
\end{proof}


\section{Proof of Corollaries \ref{cor: when q = infty} and \ref{cor: asymptotics}} 
\subsection{Proof of Corollary \ref{cor: when q = infty}}
We need the following result. (cf. Gromov \cite[p. 332]{Gromov}. 
For its proof, see Lindenstrauss-Weiss \cite[Lemma 3.2]{Lindenstrauss-Weiss}
 or Tsukamoto \cite[Example 4.1]{T2}.)
\begin{lemma}\label{lemma: fundamental widim estimate}
For $\varepsilon < 1$,
\[ \Widim_\varepsilon ([0,1]^n, d_{\ell^\infty}) = n  ,\]
where $d_{\ell^\infty}$ is the sup-distance given by 
$d_{\ell^\infty}(x, y) := \max_{i} |x_i-y_i|$.　
\end{lemma}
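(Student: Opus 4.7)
The upper bound $\Widim_\varepsilon([0,1]^n, d_{\ell^\infty}) \le n$ is immediate, since the identity map is a $0$-embedding of $[0,1]^n$ into itself, and $[0,1]^n$ is a polyhedron of dimension $n$. The substantive content is the matching lower bound, which I plan to derive by combining the classical Lebesgue covering theorem with a uniform-continuity lemma.

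The preparatory lemma I would use is the following: if $f : [0,1]^n \to Y$ is continuous and $\Diam f^{-1}(y) \le \varepsilon$ for every $y \in Y$, then for each $\eta > 0$ there exists $\delta > 0$ such that any subset $A \subset Y$ with $\Diam A < \delta$ has preimage $f^{-1}(A)$ of $d_{\ell^\infty}$-diameter at most $\varepsilon + \eta$. This is a routine compactness argument: were the conclusion to fail, one could extract sequences $x_k, x_k' \in f^{-1}(A_k)$ with $\Diam A_k \to 0$ and $d_{\ell^\infty}(x_k, x_k') \ge \varepsilon + \eta$, whose subsequential limits $x, x'$ would satisfy $f(x) = f(x')$ while $d_{\ell^\infty}(x, x') \ge \varepsilon + \eta$, violating the fiber-diameter hypothesis.

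With this in hand, suppose for contradiction that $f : [0,1]^n \to P$ is an $\varepsilon$-embedding into a polyhedron $P$ with $\varepsilon < 1$ and $\dim P \le n - 1$. Set $\eta := (1 - \varepsilon)/2$ and let $\delta$ be as above. Since $\dim P \le n - 1$, the compact image $f([0,1]^n) \subset P$ admits a finite open cover $\{V_j\}$ of mesh less than $\delta$ and multiplicity at most $n$---the standard open-cover characterization of covering dimension. Pulling back, $\{f^{-1}(V_j)\}$ is a finite open cover of $[0,1]^n$ of multiplicity at most $n$ whose members have $d_{\ell^\infty}$-diameter at most $\varepsilon + \eta < 1$. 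This contradicts the Lebesgue covering theorem, which asserts that every open cover of $([0,1]^n, d_{\ell^\infty})$ by sets of diameter strictly less than $1$ has multiplicity at least $n + 1$.

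The main technical obstacle is the Lebesgue covering theorem itself, whose standard proof invokes either a Brouwer-fixed-point argument or a simplicial-homology computation on a triangulated cube; the remaining ingredients (the uniform-continuity lemma and the cover-refinement step) are routine compactness facts. An alternative route avoiding Lebesgue's theorem uses Urysohn's lemma together with the observation that $\varepsilon < 1$ forces $f(F_i^-) \cap f(F_i^+) = \emptyset$ (where $F_i^\pm$ are the opposite faces $\{x_i = 0\}$ and $\{x_i = 1\}$ of the cube) to construct a continuous $h : [0,1]^n \to [-1,1]^n$ mapping opposite cube faces to opposite target faces; Poincar\'e--Miranda then shows $h$ is surjective onto the open target cube, while the fact that $h$ factors through $P$---combined with a simplicial approximation placing the image inside a finite union of $(n-1)$-dimensional affine pieces of measure zero in $\mathbb{R}^n$---yields the same contradiction.
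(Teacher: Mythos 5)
The paper does not prove this lemma itself; it refers to Lindenstrauss--Weiss \cite[Lemma 3.2]{Lindenstrauss-Weiss} and \cite[Example 4.1]{T2} for the proof. Your main argument, via the Lebesgue covering theorem, is a correct and standard route. The upper bound by the identity map is fine, and for the lower bound the chain of reductions is sound: the compactness lemma (fibers of diameter $\le\varepsilon$ imply preimages of small sets have diameter $\le\varepsilon+\eta$) is a correct sequential-compactness argument, and since $f([0,1]^n)$ is compact one may replace $P$ by a compact metrizable subpolyhedron so that ``diameter'' in the target is well-defined; the existence of a finite open cover of $f([0,1]^n)$ of arbitrarily small mesh and multiplicity $\le n$ is the standard open-cover characterization of $\dim\le n-1$ for compact metric spaces; and any open cover of $[0,1]^n$ whose members have $\ell^\infty$-diameter $<1$ must have multiplicity $\ge n+1$ by Lebesgue's covering theorem (each member avoids at least one of every pair of opposite faces). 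This isolates the Brouwer-type content cleanly in Lebesgue's theorem, which is exactly the depth of the result; the cited references instead run a degree-theoretic argument directly, but the two are of comparable difficulty.

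One caution about the Poincar\'e--Miranda alternative you sketch at the end: the step ``$h$ factors through $P$, so after simplicial approximation the image has measure zero'' does not quite close the loop as phrased. Surjectivity onto the open cube is established for the original $h=g\circ f$; once you replace $g$ by a nearby simplicial map $g'$, the image of $h'=g'\circ f$ does lie in a finite union of $(n-1)$-simplices (hence has measure zero), but you must separately argue that $h'$ is still surjective onto some open neighborhood of $0$. This requires a degree-stability argument (the degree of $h'$ at points near $0$ equals that of $h$, provided the perturbation is small enough that $h'(\partial[0,1]^n)$ still avoids a neighborhood of $0$), not merely that a continuous image of an $(n-1)$-dimensional set misses a point --- continuous maps can raise dimension, so the bare factorization through $P$ gives nothing about the image of $h$. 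The idea is repairable, but as written it skips the essential degree-theory step. Your primary Lebesgue argument, however, is complete.
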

From this we get:
\begin{lemma}\label{lemma: widim of l^infty ball}
Let $B_{\ell^\infty} (\mathbb{R}^n, \rho)$ be the closed ball of radius $\rho$ 
centered at the origin in $\ell^\infty (\mathbb{R}^n)$ ($\rho >0$).
Then for $\varepsilon < 2\rho$ 
\[ \Widim_\varepsilon (B_{\ell^\infty}(\mathbb{R}^n, \rho), d_{\ell^\infty}) = n .\]
\end{lemma}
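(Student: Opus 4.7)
The plan is to reduce the claim directly to Lemma \ref{lemma: fundamental widim estimate} by a simple affine rescaling. The $\ell^\infty$-ball $B_{\ell^\infty}(\mathbb{R}^n, \rho)$ is exactly the cube $[-\rho, \rho]^n$, and the affine homeomorphism $T : [0,1]^n \to [-\rho,\rho]^n$ given by $T(y) = 2\rho y - \rho (1, \dots, 1)$ satisfies $d_{\ell^\infty}(T(y), T(y')) = 2\rho \cdot d_{\ell^\infty}(y, y')$. Therefore $T$ converts $\varepsilon/(2\rho)$-embeddings of $([0,1]^n, d_{\ell^\infty})$ into $\varepsilon$-embeddings of $(B_{\ell^\infty}(\mathbb{R}^n, \rho), d_{\ell^\infty})$ and vice versa, giving
\[
\Widim_\varepsilon (B_{\ell^\infty}(\mathbb{R}^n, \rho), d_{\ell^\infty})
= \Widim_{\varepsilon/(2\rho)}([0,1]^n, d_{\ell^\infty}).
\]

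The hypothesis $\varepsilon < 2\rho$ gives $\varepsilon/(2\rho) < 1$, so Lemma \ref{lemma: fundamental widim estimate} applies to the right-hand side and produces the value $n$, which is the desired lower bound. The opposite inequality $\Widim_\varepsilon (B_{\ell^\infty}(\mathbb{R}^n, \rho), d_{\ell^\infty}) \leq n$ is immediate, since the inclusion into $\mathbb{R}^n$ is a (continuous, injective) $0$-embedding into an $n$-dimensional polyhedron.

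There is no real obstacle here; all the content of the lemma is already packed into Lemma \ref{lemma: fundamental widim estimate}, and what remains is only the bookkeeping of how $\Widim_\varepsilon$ transforms under a scaling of the metric by a positive constant.
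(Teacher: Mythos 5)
Your proof is correct and uses essentially the same argument as the paper: both apply the affine bijection $(x_1,\dots,x_n)\mapsto(2\rho x_1-\rho,\dots,2\rho x_n-\rho)$ from $[0,1]^n$ onto $B_{\ell^\infty}(\mathbb{R}^n,\rho)$ and invoke Lemma \ref{lemma: fundamental widim estimate}. You merely spell out the metric-scaling bookkeeping that the paper leaves implicit.
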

\begin{proof}
Consider the bijection 
\[ [0,1]^n \to B_{\ell^\infty}(\mathbb{R}^n, \rho) , \quad 
(x_1, \cdots , x_n) \mapsto ( 2\rho x_1 - \rho , \cdots, 2\rho x_n -\rho ) .\]
Then the statement easily follows from Lemma \ref{lemma: fundamental widim estimate}.
\end{proof}
\begin{proof}[Proof of Corollary \ref{cor: when q = infty}]
Set $m:= \min (n, \lceil (2/\varepsilon)^p \rceil -1 )$.
We already know (Theorem \ref{main theorem}) 
$\Widim_\varepsilon (B_{\ell^p}(\mathbb{R}^n), d_{\ell^\infty}) \leq m$.
We want to show $\Widim_\varepsilon (B_{\ell^p}(\mathbb{R}^n), d_{\ell^\infty}) \geq m$.
Note that for any real number $x$ we have $\lceil x \rceil -1 < x$. 
Hence $m \leq \lceil (2/\varepsilon)^p \rceil -1  < (2/\varepsilon)^p$.
Therefore $m (\varepsilon/2)^p < 1$.
Then if we choose $\rho > \varepsilon/2$ sufficiently close to $\varepsilon/2$, then ($m \leq n$) 
\[ B_{\ell^\infty}(\mathbb{R}^m, \rho ) \subset B_{\ell^p}(\mathbb{R}^n) .\]
(If $\varepsilon \geq 2$, then $m=0$ and $B_{\ell^\infty}(\mathbb{R}^m, \rho)$ is $\{0\}$.)
From Lemma \ref{lemma: widim of l^infty ball}, 
\[ \Widim_\varepsilon (B_{\ell^p}(\mathbb{R}^n), d_{\ell^\infty})
\geq \Widim_\varepsilon (B_{\ell^\infty}(\mathbb{R}^m, \rho), d_{\ell^\infty})  = m .\]
Essentially the same argument is given in Gournay \cite[pp. 5-6]{Gournay}.
\end{proof}

\subsection{Proof of Corollary \ref{cor: asymptotics}}
The following lemma easily follows from (\ref{eq: widim inequality})
\begin{lemma} \label{lemma: widim inequality}
Let $B_{\ell^q}(\mathbb{R}^n, \rho)$ be the closed ball of radius $\rho$ centered at the origin 
in $\ell^q(\mathbb{R}^n)$ ($1\leq q \leq \infty$ and $\rho >0$).
For $\varepsilon < \rho$,
\[ \Widim_\varepsilon (B_{\ell^q}(\mathbb{R}^n, \rho), d_{\ell^q}) = n .\]
\end{lemma}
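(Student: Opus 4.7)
The plan is to reduce Lemma \ref{lemma: widim inequality} to the identity (\ref{eq: widim inequality}) by a simple rescaling. The key observation is that the map $\phi:B_{\ell^q}(\mathbb{R}^n) \to B_{\ell^q}(\mathbb{R}^n, \rho)$ defined by $\phi(x) := \rho x$ is a homeomorphism that multiplies the $\ell^q$-distance uniformly by $\rho$:
\[
d_{\ell^q}(\phi(x), \phi(y)) = \rho\, d_{\ell^q}(x, y).
\]

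From this I would deduce the scaling identity
\[
\Widim_\varepsilon(B_{\ell^q}(\mathbb{R}^n, \rho), d_{\ell^q}) = \Widim_{\varepsilon/\rho}(B_{\ell^q}(\mathbb{R}^n), d_{\ell^q}).
\]
Indeed, given any $\varepsilon$-embedding $f:B_{\ell^q}(\mathbb{R}^n, \rho) \to P$ into a polyhedron $P$, the composition $f \circ \phi$ is continuous and its fibers satisfy $(f\circ\phi)^{-1}(y) = \phi^{-1}(f^{-1}(y))$, which has $d_{\ell^q}$-diameter equal to $\rho^{-1}$ times the corresponding fiber of $f$; hence $f\circ\phi$ is an $(\varepsilon/\rho)$-embedding into the same polyhedron. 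Applying the same argument to $\phi^{-1}$ gives the reverse inequality, so the two $\Widim$ values agree.

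The hypothesis $\varepsilon < \rho$ then guarantees $\varepsilon/\rho < 1$, and invoking (\ref{eq: widim inequality}) with $p = q$ yields $\Widim_{\varepsilon/\rho}(B_{\ell^q}(\mathbb{R}^n), d_{\ell^q}) = n$, which is the desired conclusion. There is no substantive obstacle here; the only point requiring a line of care is that an $\varepsilon$-embedding transforms into an $(\varepsilon/\rho)$-embedding into a polyhedron of the same dimension, but this is immediate from the definition.
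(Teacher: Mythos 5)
Your proof is correct, and it is essentially the argument the paper intends when it says the lemma ``easily follows from (\ref{eq: widim inequality})'': rescaling by $\phi(x)=\rho x$ gives the identity $\Widim_\varepsilon(B_{\ell^q}(\mathbb{R}^n,\rho),d_{\ell^q})=\Widim_{\varepsilon/\rho}(B_{\ell^q}(\mathbb{R}^n),d_{\ell^q})$, and $\varepsilon<\rho$ reduces to the unit-ball case.
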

\begin{proposition} \label{prop: lower bound}
For $1\leq p < q \leq \infty$,
\[ \min (n, \lceil \varepsilon^{-r} \rceil -1 ) \leq \Widim_\varepsilon (B_{\ell^p}(\mathbb{R}^n), d_{\ell^q}) ,\]
where $r$ is defined by $1/r = 1/p - 1/q$.
\end{proposition}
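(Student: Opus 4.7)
The plan is to realize a lower-dimensional $\ell^q$-ball of radius $>\varepsilon$ as a subset of $B_{\ell^p}(\mathbb{R}^n)$ (isometrically for the $d_{\ell^q}$ metric), and then invoke Lemma \ref{lemma: widim inequality}, in direct parallel with the argument used for Corollary \ref{cor: when q = infty}.

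Set $m := \min(n, \lceil \varepsilon^{-r} \rceil - 1)$. The case $m = 0$ is trivial, so assume $m \geq 1$. Consider the coordinate inclusion $\iota : \mathbb{R}^m \hookrightarrow \mathbb{R}^n$, $(x_1,\dots,x_m) \mapsto (x_1,\dots,x_m,0,\dots,0)$, which is an isometry for the $\ell^q$-distance. The key ingredient I would use is the standard Hölder-type comparison of norms on $\mathbb{R}^m$:
\[ \|x\|_{\ell^p} \leq m^{1/p - 1/q}\,\|x\|_{\ell^q} = m^{1/r}\,\|x\|_{\ell^q}. \]
Consequently, for any $\rho \leq m^{-1/r}$, we have $\iota(B_{\ell^q}(\mathbb{R}^m,\rho)) \subset B_{\ell^p}(\mathbb{R}^n)$.

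Now I would use the elementary inequality $\lceil x \rceil - 1 < x$ to obtain $m < \varepsilon^{-r}$, hence $m^{-1/r} > \varepsilon$. Thus I can choose $\rho$ with $\varepsilon < \rho \leq m^{-1/r}$ (for instance $\rho = m^{-1/r}$ itself). Lemma \ref{lemma: widim inequality} then yields
\[ \Widim_\varepsilon(B_{\ell^q}(\mathbb{R}^m,\rho), d_{\ell^q}) = m. \]

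To finish, I would invoke the monotonicity of $\Widim_\varepsilon$ under isometric inclusion: if $A \subset B$ inherits the metric of $B$, then any $\varepsilon$-embedding $f : B \to P$ restricts to an $\varepsilon$-embedding $f|_A : A \to P$ (since $\Diam(f|_A)^{-1}(y) \leq \Diam f^{-1}(y)$), so $\Widim_\varepsilon A \leq \Widim_\varepsilon B$. Applied to $\iota(B_{\ell^q}(\mathbb{R}^m,\rho)) \subset B_{\ell^p}(\mathbb{R}^n)$ with the $d_{\ell^q}$ metric, this gives
\[ \Widim_\varepsilon(B_{\ell^p}(\mathbb{R}^n), d_{\ell^q}) \geq \Widim_\varepsilon(B_{\ell^q}(\mathbb{R}^m,\rho), d_{\ell^q}) = m, \]
which is exactly the claim.

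There is no serious obstacle here — the proof is a matching lower bound that mirrors the $q=\infty$ case almost verbatim. The one spot requiring slight care is ensuring that the scale $m^{-1/r}$ beats $\varepsilon$, which is precisely the numerology $\lceil \varepsilon^{-r}\rceil - 1 < \varepsilon^{-r}$; this is what makes the exponent $r = pq/(q-p)$ in the Hölder comparison align with the exponent $r$ appearing in the statement.
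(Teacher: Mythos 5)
Your proof is correct and follows essentially the same route as the paper: use H\"older's inequality to show that $B_{\ell^q}(\mathbb{R}^m,\rho)$ embeds isometrically (for $d_{\ell^q}$) into $B_{\ell^p}(\mathbb{R}^n)$ for some $\rho>\varepsilon$, then combine the Widim inequality for the $\ell^q$-ball with monotonicity of $\Widim_\varepsilon$ under inclusion. The only cosmetic differences are that you single out the trivial case $m=0$ and take $\rho=m^{-1/r}$ explicitly, whereas the paper just takes $\rho>\varepsilon$ ``sufficiently close to $\varepsilon$''; both are fine.
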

\begin{proof}
We can suppose $q <\infty$.
Set $m := \min (n, \lceil \varepsilon^{-r} \rceil -1 )$.
From H\"{o}lder's inequality, 
\[ (|x_1|^p + \cdots + |x_m|^p )^{1/p} \leq m^{1/r} (|x_1|^q + \cdots + |x_m|^q)^{1/q} .\]
As in the proof of Corollary \ref{cor: when q = infty}, we have 
$m \leq \lceil \varepsilon^{-r} \rceil -1 < \varepsilon^{-r}$, i.e. $m^{1/r} \varepsilon < 1$.
Therefore if we choose $\rho >\varepsilon$ sufficiently close to $\varepsilon$, then ($m\leq n$)
\[ B_{\ell^q}(\mathbb{R}^m, \rho ) \subset B_{\ell^p}(\mathbb{R}^n) .\]
From Lemma \ref{lemma: widim inequality},
\[ \Widim_\varepsilon (B_{\ell^p}(\mathbb{R}^n), d_{\ell^q}) \geq 
\Widim_\varepsilon (B_{\ell^q}(\mathbb{R}^m, \rho ), d_{\ell^q}) = m .\]
\end{proof}
\begin{proof}[Proof of Corollary \ref{cor: asymptotics}]
From Theorem \ref{main theorem} and Proposition \ref{prop: lower bound}, we have 
\[ \lceil \varepsilon^{-r} \rceil -1 
\leq \lim_{n\to \infty} \Widim_\varepsilon (B_{\ell^p}(\mathbb{R}^n), d_{\ell^q}) \leq 
\lceil (2/\varepsilon)^{r} \rceil -1 .\]
From this estimate, we can easily get the conclusion.
\end{proof}


\section{Proof of Corollary \ref{corollary: mean dimension}}
Let $1\leq p < \infty$ and $\varepsilon >0$. Set $X:= B(\ell^p(\Gamma))$.
To begin with, we want to fix a distance on $X$ (compatible with the topology).
Since $X$ is compact, if we prove (\ref{eq: stability of widim}) for one fixed 
distance, then (\ref{eq: stability of widim}) becomes valid for any distance on $X$.
Let $w:\Gamma \to \mathbb{R}_{>0}$ be a positive function satisfying 
\begin{equation*}
 \sum_{\gamma\in \Gamma} w(\gamma) \leq 1 .
\end{equation*}
We define the distance $d(\cdot, \cdot)$ on $X$ by
\[ d(x, y) := \sum_{\gamma\in \Gamma} w(\gamma) |x_\gamma - y_\gamma| \quad
 \text{for $x= (x_\gamma)_{\gamma\in \Gamma}$ and $y= (y_\gamma)_{\gamma\in \Gamma}$ in $X$}. \]
As in Section 1, we define the distance $d_\Omega$ on $X$ for a finite subset $\Omega\subset \Gamma$ by
\[ d_\Omega(x, y) := \max_{\gamma\in \Omega} d(x\gamma, y\gamma) .\]

For each $\delta\in \Gamma$, there is a finite set $\Omega_\delta \subset \Gamma$ such that 
\[ \sum_{\gamma\in \Gamma\setminus \Omega_\delta} w(\delta^{-1}\gamma) \leq \varepsilon /4.\]
Set $\Omega' := \bigcup_{\delta\in \Omega}\Omega_\delta$. $\Omega'$ is a finite set satisfying
\[ \sum_{\gamma\in \Gamma\setminus \Omega'} w(\delta^{-1}\gamma) \leq \varepsilon /4 \quad
\text{for any $\delta\in \Omega$}. \]
Set $c: = \lceil (4/\varepsilon)^p \rceil -1$.
Let $\pi:X\to B_{\ell^p}(\mathbb{R}^{\Omega'}) = \{x\in \mathbb{R}^{\Omega'}|\,\norm{x}{p}\leq 1\}$ 
be the natural projection.
From Theorem \ref{main theorem}, there are a
polyhedron $K$ of dimension $\leq c$ and an $\varepsilon /2$-embedding 
$f:(B_{\ell^p}(\mathbb{R}^{\Omega'}), d_{\ell\infty})\to K$.
Then $F:= f \circ \pi :(X, d_\Omega)\to K$ becomes an $\varepsilon$-embedding; 
If $F(x) = F(y)$, then $d_{\ell^\infty}(\pi(x), \pi(y)) \leq \varepsilon/2$ and for each $\delta\in \Omega$
\begin{equation*}
 \begin{split}
 d(x\delta, y\delta) &= \sum_{\gamma\in \Omega'}w(\delta^{-1}\gamma)|x_\gamma-y_\gamma|
  + \sum_{\gamma\in \Gamma\setminus \Omega'} w(\delta^{-1}\gamma)|x_\gamma-y_\gamma|, \\
  &\leq \frac{\varepsilon}{2}\sum_{\gamma\in \Omega'}w(\delta^{-1}\gamma) 
   + 2\sum_{\gamma\in \Gamma\setminus \Omega'} w(\delta^{-1}\gamma) ,\\
  &\leq \varepsilon /2 + \varepsilon /2 = \varepsilon.
 \end{split}
\end{equation*}
Hence $d_\Omega(x, y)\leq \varepsilon$.
Therefore,
\[ \Widim_\varepsilon (X, d_\Omega) \leq c .\]
This shows (\ref{eq: stability of widim}).
If $\Gamma$ has an amenable sequence $\{\Omega_i\}_{i\geq 1}$, then $|\Omega_i|\to \infty$ and hence
\[ \lim_{i\to \infty}\Widim_\varepsilon (X, d_{\Omega_i})/|\Omega_i| = 0.\]
This shows (\ref{eq: mean dimension = 0}).

\vspace{10mm}

\address{ Masaki Tsukamoto \endgraf
Department of Mathematics, Faculty of Science \endgraf
Kyoto University \endgraf
Kyoto 606-8502 \endgraf
Japan
}

\textit{E-mail address}: \texttt{tukamoto@math.kyoto-u.ac.jp}

\end{document}